\theoremstyle{plain}
\newtheorem{thm}{Theorem}[section]
\newtheorem{prop}[thm]{Proposition}
\newtheorem{cor}[thm]{Corollary}
\newtheorem{lem}[thm]{Lemma}
\theoremstyle{definition}
\newtheorem{defn}[thm]{Definition}
\newtheorem{rmk}[thm]{Remark}
\numberwithin{equation}{section}
\begin{document}
\title{Prevalent behavior and almost sure Poincar\'{e}-Bendixson Theorem \\ for smooth flows with invariant $k$-cones\thanks{Supported by NSF of China No.11825106, 11971232 and 12090012.}}
\author{Yi Wang}
\author{Jinxiang Yao}
\author{Yufeng Zhang\thanks{Corresponding author: zyfp@mail.ustc.edu.cn (Y. Zhang).}}
\affil{School of Mathematical Science\par University of Science and Technology of China\par Hefei, Anhui, 230026, P. R. China}
\date{}
  \maketitle
\begin{abstract}
We investigate the global dynamics from a measure-theoretic perspective for smooth flows with invariant cones of rank $k$. For such systems, it is shown that prevalent (or equivalently, almost all) orbits will be pseudo-ordered or convergent to equilibria. This reduces to Hirsch's prevalent convergence Theorem if the rank $k=1$; and implies an almost-sure Poincar\'{e}-Bendixson Theorem for the case $k=2$. These results are then applied to obtain an almost sure Poincar\'e-Bendixson theorem for high-dimensional differential equations.
\end{abstract}

\section{Introduction and Main Results}
In this article,  we are interested in the global dynamics {\it in measure-theoretic sense} for flows with invariant cones of rank $k$ (abbr. $k$-cone). Such a flow is also called monotone with respect to $k$-cone $C$ (see Section \ref{sec2}). Here, a $k$-cone $C\subset \mathbb{R}^{n}$ means a closed subset that contains a linear $k$-dimensional subspace and no linear subspaces of higher dimension.

From a measure-theoretic (or probabilistic) perspective, prevalence is a quite useful notion, introduced by Christensen \cite{C72} and Hunt et al.\cite{HSY93}, that describes properties of interest occurs for ``almost surely".  For Euclidean spaces, it is equivalent to notion of full Lebesgue measure (see Section \ref{sec2}). Over decades since its development, prevalence have undergone extensive investigations. We refer to \cite{HSY93,K97,BV10,EN20} (and references therein) for more details.

In dynamical systems, prevalence is parallel to another classical notion called genericity, which formulates in the topological sense the properties of interest occurs residually (i.e., on a countable intersection of open dense subsets in a Baire space). Many examples in dynamical systems are known to be both generic in the topological sense and prevalent in measure-theoretic sense. However, on the other hand, there are also many cases properties that are generic are not prevalent, and vise-versa (see, e.g. \cite{BV10,K97}).

We point out that a $k$-cone is not a standard cone defined in the literature, but adopted from by Fusco-Oliva \cite{FO88,FO91} (see also Krasnosel\'skii et al. \cite{KLS89} for a Banach space). In particular, a convex cone $K$ (defined in the standard sense) gives rise to a $1$-cone, $K\cup(-K)$. Therefore, the class of flows we consider here naturally includes the classical monotone dynamical systems since the ground-breaking work of M.W. Hirsch \cite{H84,H88,H82,H85} (see monographs or surveys \cite{AS03,HS05,P02,H95,H17}, we call it as classical monotone systems). There is however an essential difference between a $k(\ge 2)$-cone and a convex cone $K$, due to the lack of convexity in the former. The lack of convexity requires substantially new ideas for exploring the
implication of ``monotonicity" for the dynamics of the considered flows. Typical examples of flows monotone with respect to $k(\ge 2)$-cones are the high-dimensional competitive systems (see, for example \cite{B13,B19,H88n,JMW09,M20,M94,MNR19,JN17,WJ01}); systems with quadratic cones and Lyapunov-like functions (see, for example, \cite{OS00,R80,R87}); and monotone cyclic feedback systems with negative feedback (\cite{MN13,MS96b,MH90}) arising from a wide range of neural and physiological control systems, etc..

Classical monotone dynamical systems are abundant and important sources of topological genericity and prevalence. For continuous-time systems, the celebrated Hirsch's generic convergence theorem first indicates that the generic precompact orbit approaches the set of equilibria (see Pol\'{a}\v{c}ik \cite{P89,P90} and Smith-Thieme \cite{ST91} for the improved version). Meanwhile, Enciso, Hirsch and Smith \cite{GMH08} investigated the prevalent behavior and proved that the set of points that converge to equilibria is prevalent. For discrete-time systems (mappings), Pol\'a\v{c}ik-Tere\v{s}\v{c}\'{a}k \cite{PT91} first proved that the generic convergence to periodic orbits occurs provided that the mapping $F$ is of class $C^{1,\alpha}$ (see Tere\v{s}\v{c}\'{a}k \cite{T94p} and Wang-Yao \cite{WY20} for $C^1$-smooth systems). Very recently, the present authors \cite{WYZ22} proved the set of points that converge to periodic orbits is prevalent.

An insightful observation for classical strongly monotone systems exhibits a kind of important typical nontrivial orbits, called pseudo-ordered orbits. More generally, for a $k$-cone $C$, a nontrivial orbit $O(x):=\{\Phi_t(x) :t \ge 0\}$ is pseudo-ordered if $O(x)$ possesses one pair of distinct points $\Phi_{t}(x)$, $\Phi_{s}(x)$ such that $\Phi_{t}(x)-\Phi_{s}(x)\in C$. This is also first handled by Sanchez \cite{S09}. Very recently, Feng, Wang and Wu \cite{FWW19} proved that generic (open and dense) orbits in $\mathbb{R}^n$ are either pseudo-ordered or convergent to equilibria. This covers the Hirsch's Generic Convergence Theorem in the case $k=1$. Together with their previous work \cite{FWW17}, a {\it generic Poincar\'{e}-Bendixson Theorem} is thus obtained for the case $k=2$ (see \cite[Theorem B]{FWW19}).

In the present paper, we focus on the global behavior of a flow in strongly monotone
with respect to $k(\ge 2)$-cone $C$ {\it from a measure-theoretic perspective}. Before describing our main results and our approach, we formulate the standard assumptions:

\vskip 2mm

\begin{description}
  \item[(H)] The flow $\Phi_{t}$ is $C^{1,\alpha}$-smooth and strongly monotone with respect to a $k$-solid cone $C$ in $\mathbb{R}^{n}$. Moreover, the $x$-derivative $D_{x}\Phi_{t}$ satisfies $D_{x}\Phi_{t}(C\backslash\{0\})\subset {\rm Int}C$ for $t>0$.
\end{description}

\noindent We refer to Section \ref{sec2} for more detailed definitions about strong monotonicity and others. Throughout the paper, we always assume {\bf (H)} holds. Let
$$ Q=\{x\in \mathbb{R}^n:\text{ the orbit } O(x) \text{ is pseudo-ordered}\}.$$

\vspace{2 ex}

\noindent$\textbf{Theorem A (Prevalent Dynamics Theorem)}.$
{\it Let $D\subset \mathbb{R}^n$ be an open bounded set such that the orbit set $O(D)$ of $D$ is bounded. Then the set $\{x\in D: x\in Q \text{ or } \omega(x)  \text{ is a singleton}\}$ is prevalent in $D$. Furthermore, the set has full Lebesgue measure in $D$.}

\vspace{2 ex}

Theorem A concludes that, for almost every point in $D$, the orbit is either pseudo-ordered or convergent to a single equilibrium. As we mentioned above, the set $$D^\prime\triangleq\{x\in D: x\in Q \text{ or } \omega(x)  \text{ is a singleton}\}$$ is also {\it generic} (i.e., contains an open and dense subset) in $D$ (c.f. \cite[Theorem A]{FWW19}). This entails that, for smooth flow strongly monotone
with respect to $k$-cone, the fact ``convergence to an equilibrium or the orbit is pseudo-ordered" is both prevalent in measure-theoretic sense and generic in the topological sense.

Needless to say, if the rank $k=1$, Theorem A in conjunction with the Monotone Convergence Criterion (see \cite{H95}) naturally reduces to the prevalent convergence obtained by Enciso, Hirsch and Smith \cite[Theorem 1]{GMH08}
for $\mathbb{R}^n$.

\vspace{1 ex}
Moreover, if the rank $k=2$, we have the following:

\vspace{1 ex}

\noindent$\textbf{Theorem B (Almost Sure Poincar\'e-Bendixson Theorem)}.$
{\it Let all hypotheses in Theorem A hold. Assume also that $k=2$ and $C$ is completed. Then, for almost every point $x\in D$, the $\omega$-limit set $\omega(x)$ containing no equilibria is a single closed orbit.}

\vspace{1 ex}
As mentioned above, Feng et.al \cite{FWW19} obtained the so called ``{\it Generic Poincar\'e-Bendixson Theorem}", that is, for generic (open and dense) points $x\in D$, the $\omega$-limit set $\omega(x)$ containing no equilibria is a single closed orbit. A drawback of topological genericity is that open dense subsets can be arbitrarily small in terms of measure. However, Theorem B here exhibits that, for smooth flow strongly monotone with respect to $2$-cone, the ``Poincar\'e-Bendixson Theorem" holds not only generically in the topological sense, but also almost-surely in measure-theoretic sense.

Compared to the classical monotone dynamical flows, the generic/prevalent behavior of the flows with invariant $k(\ge 2)$-cones is much more complicated. Due to the loss of convexity, the ``order"-relation defined by $C$ (see Section \ref{sec2}) is not a partial order, because it is neither antisymmetric nor transitive. So, it requires substantially new techniques for exploring the implication of monotonicity for the dynamics of the considered flows.

Our approach involves the ergodic argument using the $k$-exponential separation and the associated $k$-Lyapunov exponent (that reduces to the first Lyapunov exponent if $k=1$), which was first introduced in \cite{FWW19}. Plus, we present a novel interesting lemma, called {\it the Probe Lemma} (see Section \ref{sec-DL}), which plays a decisive role for the proof of the main results. Roughly speaking, this lemma indicates that one can detect the pseudo-ordered orbits surrounding the point $z\notin D^\prime$, by tracking the information restricted on a $k$-dimensional probe $P$ passing through $z$ (see Lemma \ref{probe}). Based on the Probe Lemma, we succeeded in proving the main results in Section \ref{sec4}.

In section \ref{sec6}, our main results will be applied to an $n$-dimensional ODE system with a quadratic cone and obtain the almost sure Poincar\'e-Bendixson theorem in such  high-dimensional system. R. A. Smith \cite{R80,R87} has ever obtained a Poincar\'e-Bendixson theorem for such system under the assumption of the existence of  a quadratic Lyapunov function. The connections between Smith's results and the systems with invariant $2$-cones was observed by Sanchez \cite{S09}. Feng et.al \cite{FWW19} proved a generic Poincar\'e-Bendixson Theorem even if a quadratic Lyapunov function can not be constructed. Our Theorem B here shows that for such system, the ``Poincar\'e-Bendixson Theorem" holds not only generically, but almost-surely in measure-theoretic sense.

\noindent\section{Notations and Preliminary Results}\label{sec2}
In this section, we will fix some fundamental notations. Let $\mathbb{R}^{n}$ be the $n$-dimensional Euclidean space with a norm $\|\cdot\|$ and $\Phi(t,x)$ be the \emph{flow} on $\mathbb{R}^{n}$, which is a continuous map $\Phi: \mathbb{R}\times\mathbb{R}^{n}\rightarrow\mathbb{R}^{n}$ with: $\Phi_{0}(x)=(x)$ and $\Phi_{t}\circ\Phi_{s}(x)=\Phi_{t+s}(x)$ for $t,s\in \mathbb{R}$. We will henceforth continue to denote $\Phi(t,x)$ by $\Phi_{t}(x)$. A \emph{$C^{1,\alpha}$-smooth} flow $\Phi_{t}$ on $\mathbb{R}^{n}$ is a flow which $\Phi|_{\mathbb{R}\times\mathbb{R}^{n}}$ is a $C^{1,\alpha}$-map (a $C^{1}$-map with a locally $\alpha$-H\"older derivative) with $\alpha\in(0,1]$. We denote the \emph{derivatives} of $\Phi_{t}$ with respect to $x$ at $(t,x)$ by $D_{x}\Phi_{t}$.\par

For a flow $\Phi(t,x)$ on $\mathbb{R}^{n}$, the \emph{orbit of x} is the set $O(x)=\{\Phi_{t}(x):t\in \mathbb{R}\}$. The \emph{$\omega$-limit set} of $x$ is $\omega(x)=\cap_{s\geq0}\overline{\cup_{t\geq s}\Phi_{t}(x)}$. We say that the flow $\Phi_{t}$ is \emph{$\omega$-compact} in a subset $X\subset\mathbb{R}^{n}$ if $O(x)$ is relatively compact for each $x\in X$ and $\bigcup_{x\in X}\omega(x)$ is relatively compact in $\mathbb{R}^{n}$. A point $x$ is an \emph{equilibrium} if $O(x)=\{x\}$. The set of equilibria is denoted by $\mathcal{E}$.\par

\begin{defn}\label{kcone}
Let $C\subset\mathbb{R}^{n}$ be a closed set. It is called a \emph{k-cone} of $\mathbb{R}^{n}$ if it satisfies:\par
(i) For any $v\in C$ and $l\in \mathbb{R}$, $lv\in C$;\par
(ii) $\max\{{\rm dim}\ W: W\subset C \text{ is a linear subspace}\}=k$.\\
Moreover, the integer $k(\geq1)$ is called the rank of $C$.
\end{defn}

A $k$-cone $C\subset\mathbb{R}^{n}$ is called \emph{$k$-solid} if there is a $k$-dimensional linear subspace $W$ such that $W\backslash\{0\}\subset \rm{Int}$$C$, and is called \emph{complemented} if there is a $k$-codimensional space $H^{c}\subset\mathbb{R}^{n}$ such that $H^{c}\cap C=\{0\}$. A pair $x,y\in \mathbb{R}^{n}$ are said to be \emph{ordered} if $x-y\in C$, denoted by $x\sim y$; otherwise, $x$ and $y$ are called to be \emph{unordered}. We also write $x\approx y$ and call $x$ and $y$ are \emph{strongly ordered} if $x-y\in \rm{Int}$$C$. Two sets $U,V\subset \mathbb{R}^{n}$ are said to be \emph{strongly ordered} if $x-y\in C$ for any $x\in U$ and $y\in V$, denoted by $U\approx V$.\par

Let $\Phi(t,x)$ denote a flow on $\mathbb{R}^{n}$. We call $\Phi(t,x)$ \emph{monotone with respect to a k-solid cone C} provided $\Phi_{t}(x)\sim\Phi_{t}(y)$, whenever $x\sim y$ and $t\geq0$; and \emph{strongly monotone with respect to a k-solid cone C} provided $\Phi_{t}(x)\approx\Phi_{t}(y)$, whenever $x\neq y$, $x\sim y$ and $t>0$. We call a nontrivial orbit $O(x)$ \emph{pseudo-ordered}, if there are two distinct points $\Phi_{t_{1}}(x)$, $\Phi_{t_{2}}(x)$ in $O(x)$ such that $\Phi_{t_{1}}(x)\sim\Phi_{t_{2}}(x)$; otherwise, $O(x)$ is called \emph{unordered}.
\vskip 2mm

In the following, we give the definition of \emph{prevalence} (see, e.g. \cite{BV10,HSY93,GMH08}). Let $\mathbb{B}$ be a Banach space. A Borel set $W\subset \mathbb{B}$ is \emph{shy} if there exists a nonzero compactly supported Borel measure $\mu$ on $\mathbb{B}$ such that $\mu(v+W)=0$ for every $v\in\mathbb{B}$. More generally, a set is called \emph{shy} if it is contained in a shy Borel set. A set is \emph{prevalent} if its complement is shy. Given $A\subset \mathbb{B}$, we say that a set $W$ is \emph{prevalent in} $A$ if $A\setminus W$ is shy.\par
In general situation, shy sets have the following properties: (i) Every subset of a shy set is shy; (ii) Every translation of a shy set is shy; (iii) No nonempty open set is shy. (iv) Every countable union of shy sets is shy (\cite{HSY93,BV10}). In finite-dimensional spaces, a set $M$ is shy if and only if it has Lebesgue measure zero (see, e.g. \cite[Proposition 2.5]{BV10} or \cite[Fact 6]{HSY93}).\par

\section{The Probe Lemma}\label{sec-DL}

Before we focus on our main theorems, we need to present a technical lemma, called the {\it Probe Lemma}, which turns out to be very crucial for our approach. To simplify the notation, we write $$ S=\{x\in \mathbb{R}^n: \omega(x) \text{ is a singleton}\}.$$
Clearly, $S$ is a Borel set (see \cite[lemma 9]{GMH08}).\par
We call a $k$-dim linear subspace $P\subset\mathbb{R}^{n}$ a \emph{$k$-probe} if $P\subset {\rm Int}C$ (motivated by \cite{BV10}). Given any $x\in \mathbb{R}^{n}$, let $P_{x}=x+P$. Clearly, $P_{x}=P_{y}$ if and only if $x-y\in P$. Let $B(x,\varepsilon)$ be the $\varepsilon$-neighborhood of $x$. The set $B^{P}(x,\varepsilon)=P_{x}\cap B(x,\varepsilon)$ is called the probe $\varepsilon$-neighborhood of $x$.\par

\vskip 2mm

\begin{lem}[\textbf{Probe Lemma}]\label{probe}
Let all hypotheses in Theorem A hold. For any $x\in D\setminus (Q\cup S)$ and any $k$-probe $P$, there exists $\varepsilon>0$ such that $B^{P}(x,\varepsilon)\backslash\{x\}\subset Q$.
\end{lem}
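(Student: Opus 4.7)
The plan is to pick any $v\in P\setminus\{0\}$ of small norm, set $y=x+v$, and produce times $s_n<t_n$ with $\Phi_{t_n}(y)-\Phi_{s_n}(y)\in C$. Since $P\subset\mathrm{Int}\,C$, the difference $y-x$ lies in $\mathrm{Int}\,C$, and the strong monotonicity hypothesis (H) gives $w(t):=\Phi_t(y)-\Phi_t(x)\in\mathrm{Int}\,C$ for every $t>0$. Because $x\notin S$, the compact connected set $\omega(x)$ contains more than one point, so there is some $p\in\omega(x)$ whose neighborhood is revisited by $O(x)$ at arbitrarily large times; this furnishes $s_n<t_n\to\infty$ with $\Phi_{s_n}(x),\Phi_{t_n}(x)\to p$ and $a_n:=\Phi_{t_n}(x)-\Phi_{s_n}(x)\to 0$. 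The identity
\[
\Phi_{t_n}(y)-\Phi_{s_n}(y)=w(t_n)-w(s_n)+a_n
\]
reduces the lemma to finding $n$ such that $w(t_n)-w(s_n)$ lies in $\mathrm{Int}\,C$ strictly enough to absorb the vanishing term $a_n$.

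For this I would invoke the $k$-exponential separation along the bounded orbit $O(x)$, the central tool flagged in the introduction and developed in \cite{FWW19}. It supplies an invariant splitting $\mathbb{R}^n=E^1_{\Phi_t(x)}\oplus E^2_{\Phi_t(x)}$ with $\dim E^1=k$, $E^1\setminus\{0\}\subset\mathrm{Int}\,C$, $E^2\cap C=\{0\}$, and an exponential gap between the growth rates of $D_x\Phi_t$ on $E^1$ and on $E^2$. Since $P\subset\mathrm{Int}\,C$ and $E^2_x\cap C=\{0\}$, the $E^1$-component of $v$ is nonzero; combined with the linearization $w(t)=D_x\Phi_t(v)+o(\|v\|)$, the exponential dominance forces $w(t)/\|w(t)\|$ into an arbitrarily small neighborhood of $E^1_{\Phi_t(x)}$ once $\|v\|$ is small and $t$ is large. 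As $\Phi_{s_n}(x),\Phi_{t_n}(x)\to p$, the $k$-planes $E^1_{\Phi_{s_n}(x)}$ and $E^1_{\Phi_{t_n}(x)}$ both converge to a common plane $E^1_p\subset\mathrm{Int}\,C\cup\{0\}$, so $w(s_n)$ and $w(t_n)$ align asymptotically with $E^1_p$. Choosing the gaps $t_n-s_n$ large and exploiting the expanding behaviour of the linearized return map on the $E^1$ sub-bundle (governed by the positive $k$-Lyapunov exponent), I would arrange $\|w(t_n)\|$ to strictly dominate $\|w(s_n)\|$, so that $w(t_n)-w(s_n)$ sits close to $E^1_p\setminus\{0\}$ in direction and is uniformly away from $\partial C$. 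Adding $a_n\to 0$ then keeps the sum inside $\mathrm{Int}\,C$, which is open.

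The main obstacle is precisely ensuring $w(t_n)-w(s_n)\in\mathrm{Int}\,C$: because the $k$-cone $C$ is not convex for $k\ge 2$, two vectors in $\mathrm{Int}\,C$ can easily have a difference lying outside $C$, so naive Euclidean estimates fail. It is the pairing of the $k$-exponential separation (confining both $w(s_n)$ and $w(t_n)$ to a common $k$-plane sitting uniformly deep inside $\mathrm{Int}\,C$) with the recurrence at $p$ (exhibiting an expanding return that prevents any $E^1$-cancellation) that resolves this. Making this quantitative \emph{uniformly} for every $v\in P$ with $0<\|v\|\le\varepsilon$, rather than for a generic $v$, is the most delicate step, since one must trade off the size of $\|v\|$ against the speed of recurrence and the rate $\|w(t_n)\|/\|w(s_n)\|\to\infty$ produced by the $k$-Lyapunov exponent.
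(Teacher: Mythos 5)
Your overall strategy---write $\Phi_{t_n}(y)-\Phi_{s_n}(y)=w(t_n)-w(s_n)+a_n$ with $w(t)=\Phi_t(y)-\Phi_t(x)$ and force the right-hand side into ${\rm Int}\,C$ by exponential separation---is genuinely different from the paper's, and as sketched it has a gap at exactly the point you flag as "the main obstacle". First, you assume the $k$-Lyapunov exponent at the recurrent point $p$ is positive and that the linearized return map expands on $E^1$ there. This is not available for free: under the standing assumption $x\notin Q\cup S$ one can only exclude the case where some \emph{regular} point of $\omega(x)$ has $\lambda_{kz}\le 0$; there may remain non-regular points of $\omega(x)$ with non-positive exponent, and the paper's Proposition \ref{class} devotes its entire case (c) to this mixed situation (where the resolution is sometimes "$x\in Q$ after all", via a flow-direction argument, not an expansion estimate). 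Second, the alignment of the \emph{nonlinear} difference $w(t)$ with $E^1_{\Phi_t(x)}$ uniformly over unbounded time is not a consequence of the pointwise linearization $w(t)=D_x\Phi_t(v)+o(\|v\|)$, whose error is not uniform in $t$; it is itself a theorem-level input (this is where the $C^{1,\alpha}$ hypothesis and the machinery of \cite{FWW19} enter), and your plan leans on it without a source. Third, even granting both, coordinating "$\|w(t_n)\|\gg\|w(s_n)\|$", "$a_n\to 0$", and "directional error small relative to $\|w(t_n)-w(s_n)\|$" simultaneously is the hard quantitative cancellation problem you identify, and the proposal does not actually resolve it.

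The paper sidesteps all of this with a soft contradiction argument that you may want to compare with. Suppose $B^P(x,\varepsilon)\setminus\{x\}\not\subset Q$ for every $\varepsilon$, so there is a sequence $x_n\to x$ in $P_x\setminus Q$. Since $x\notin Q\cup S$, Proposition \ref{class} (whose proof is where the exponential separation, the Lyapunov-exponent trichotomy, and the separation estimate $\limsup_t\|\Phi_t(x_n)-\Phi_t(x)\|\ge\delta$ from \cite{FWW19} actually live) produces two distinct \emph{ordered} limit points $p\in\omega(x)$ and $q\in\omega_c(x)$; strong monotonicity upgrades this to $p\approx q$, hence to strongly ordered neighborhoods $U\approx V$. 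Then a single orbit $O(x_{n_k})$ visits $U$ at a fixed finite time $\tau$ (by continuity, since $\Phi_\tau(x)\in U$) and visits $V$ at some large time $s_k$, so $\Phi_\tau(x_{n_k})\approx\Phi_{s_k}(x_{n_k})$ and $x_{n_k}\in Q$ --- a contradiction. No estimate on $w(t_n)-w(s_n)$ is ever needed, because the ordering is detected between two points of the \emph{same} orbit lying in two strongly ordered \emph{open sets}, not by subtracting two nearly parallel vectors in a non-convex cone. To repair your write-up you would either have to supply the full case analysis of Proposition \ref{class} (including the non-regular-point case) together with the uniform $E^1$-alignment of nonlinear differences, or restructure the argument along the paper's contradiction scheme.
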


\vskip 2mm

\begin{rmk}
When the rank $k=1$, the classical Monotone Convergence Criterion (c.f. \cite[Theorem 1.2.1]{H95}) yields that $Q\subset S$ in $D$. Under such spacial circumstances, the Probe Lemma naturally reduces to ``{\it For $x\in D\setminus S$, there is a neighborhood $\mathcal{N}$ of $x$ such that, for any $y\in \mathcal{N}$ with $y\sim x$, one has $y\in S$"}. This fact is a part of Sequential Limit Trichotomy (see, e.g. \cite[Theorem 1.4.1(b) or Proposition 1.4.4(d)]{H95}) for classical monotone flows, or parallel to the Discontinuity Principle (see, e.g. \cite[Proposition 3.3]{P92}) for classical monotone discrete-time systems.
\end{rmk}

\vskip 2mm

Before proving the Probe Lemma, we are going to introduce some tools as $k$-Exponential Separation and $k$-Lyapunov Exponents, which we inherit from Feng, Wang and Wu \cite{FWW19} and play an important role in the following proof.\par

Let $K\subset\mathbb{R}^{n}$ be an invariant compact subset. Then, $(\Phi_{t},D\Phi_{t})$ naturally defines a linear skew-product flow on $K\times\mathbb{R}^{n}$. Let $\{Z_{x}\}_{x\in K}$ be a family of $k$-dimensional subspaces of $\mathbb{R}^{n}$, and $G(k,\mathbb{R}^{n})$ consists of $k$-dim linear subspaces of $\mathbb{R}^{n}$ which is a complete metric space by endowing the gap metric (see, e.g. \cite{LW15,LL10}). Then, $K\times(Z_{x})$ is said to be a \emph{$k$-dimensional continuous vector bundle} if the map $K\rightarrow G(k,\mathbb{R}^{n}): x\mapsto Z_{x}$ is continuous. Also, we say the continuous vector bundle $K\times(Z_{x})$ is invariant with respect to $(\Phi_{t},D\Phi_{t})$ if $D\Phi_{t}Z_{x}=Z_{\Phi_{t}x}$ for any $x\in K$ and $t\geq0$.\par

We call $(\Phi_{t},D\Phi_{t})$ admits a $k$\emph{-exponential separation along} $K$ (for short, $k$\emph{-exponential separation}), if there are $k$-dimensional continuous bundle $K\times(E_{x})$ and $(n-k)$-dimensional continuous bundle $K\times(F_{x})$ such that\par
(i) $\mathbb{R}^{n}=E_{x}\oplus F_{x}$, for any $x\in K$;\par
(ii) $D_{x}\Phi_{t}E_{x}=E_{\Phi_{t}(x)}$, $D_{x}\Phi_{t}F_{x}\subset F_{\Phi_{t}(x)}$ for any $x\in K$ and $t>0$;\par
(iii) there are constants $M>0$ and $0<\gamma<1$ such that $$\|D_{x}\Phi_{t}w\|\leq M\gamma^{t}\|D_{x}\Phi_{t}v\|$$ for all $x\in K$, $w\in F_{x}\cap S$, $v\in E_{x}\cap S$ and $t\geq0$, where $S=\{v\in \mathbb{R}^{n}:\|v\|=1\}$.\par
In addition, if $C\subset \mathbb{R}^{n}$ is a $k$-solid cone and\par
(iv) $E_{x}\subset \text{Int}C\cup\{0\}$ and $F_{x}\cap C=\{0\}$ for any $x\in K$, \\
then $(\Phi_{t},D\Phi_{t})$ is called to admit a $k$\emph{-exponential separation along} $K$ \emph{associated with} $C$.\par
It is known that $(\Phi_{t},D\Phi_{t})$ admits a $k$-exponential separation along $K$ associated with $C$ if the flow $\Phi_{t}$ satisfied our assumption (H) (see, e.g. \cite[Corollary 2.2 or Theorem 4.1]{T94} and \cite[Proposition A.1]{M94}).

Let $K\subset\mathbb{R}^{n}$ be an invariant compact subset for $\Phi_{t}$, and $(\Phi_{t},D\Phi_{t})$ admit a $k$-exponential separation $$K\times\mathbb{R}^{n}=E\oplus F,$$ where $E=K\times(E_{x})$ and $F=K\times(F_{x})$. For each $x\in K$, we define the $k$\emph{-Lyapunov exponent} as $\lambda_{kx}=\limsup_{t\rightarrow\infty}\frac{\log m(D_{x}\Phi_{t}|_{E_{x}})}{t},$ where $m(D_{x}\Phi_{t}|_{E_{x}})=\inf_{v\in E_{x}\cap S}\|D_{x}\Phi_{t}v\|$ is the infimum norm of $D_{x}\Phi_{t}$ restricted to $E_{x}$. When $k=1$, $\lambda_{kx}$ naturally reduces to first (or principal) Lyapunov exponent (see, e.g. \cite{PT91}).\par
A \emph{regular point} is a point $x\in K$ for which $\lambda_{kx}=\lim_{t\rightarrow\infty}\dfrac{\log m(D_{x}\Phi_{t}|_{E_{x}})}{t}$.\par

\vskip 2mm
\begin{prop}\label{class}
For any $x\in D$, one of the alternatives must occur:\par
\rm{(i)} $x\in Q\cup S$;\par
\rm{(ii)} For any sequence $\{x_{n}\}_{n=1}^{\infty}\subset D\backslash \{x\}$ with $x_{n}\rightarrow x$ and $x_{n}\sim x$, there are two points $p\in \omega(x)$, $q\in \omega_{c}(x)$ such that $p\sim q$, where $\omega_{c}(x)=\cap_{n\geq1}\overline{\cup_{m\geq n}\omega(x_{m})}$.
\end{prop}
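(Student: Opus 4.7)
My plan is to verify alternative (ii) unconditionally; the disjunction ``(i) or (ii)'' then follows without any case split, with (i) simply providing possibly extra information when it happens to apply. So fix $x\in D$ and any admissible sequence $\{x_n\}\subset D\backslash\{x\}$ with $x_n\to x$ and $x_n\sim x$. Definition \ref{kcone}(i) forces $C=-C$, hence ``$x_n\sim x$'' is literally $x_n-x\in C\backslash\{0\}$; monotonicity of $\Phi_t$ then propagates the ordering forward: $\Phi_t(x_n)-\Phi_t(x)\in C$ for all $t\ge 0$ and all $n$.

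First, for each fixed $n$ I would perform an inner passage to the limit in $t$. The hypothesis of Theorem A guarantees that $\overline{O(D)}$ is compact, so both orbits $\{\Phi_t(x)\}$ and $\{\Phi_t(x_n)\}$ stay in it. Starting from an arbitrary $t_k\to\infty$, a two-step subsequence extraction produces a common subsequence (still denoted $t_k$) along which $\Phi_{t_k}(x)\to p_n\in\omega(x)$ and $\Phi_{t_k}(x_n)\to q_n\in\omega(x_n)$. Closedness of the cone $C$ carries the inclusion $\Phi_{t_k}(x_n)-\Phi_{t_k}(x)\in C$ through the limit, giving $q_n-p_n\in C$, i.e.\ $p_n\sim q_n$.

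Next, I would perform an outer passage to the limit in $n$. Compactness of $\omega(x)$ and of $\overline{O(D)}$ lets me extract a further subsequence with $p_n\to p\in\omega(x)$ and $q_n\to q$. From the definition $\omega_c(x)=\bigcap_{N\ge 1}\overline{\bigcup_{m\ge N}\omega(x_m)}$ and the membership $q_n\in\omega(x_n)$, I see immediately $q\in\omega_c(x)$, since for each $N$ the tail $\{q_n\}_{n\ge N}$ already lies in $\overline{\bigcup_{m\ge N}\omega(x_m)}$. Closedness of $C$ one more time turns $q_n-p_n\in C$ into $q-p\in C$, which is exactly $p\sim q$. This establishes (ii).

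The hardest part will be purely organisational: orchestrating two successive diagonal extractions so that both limits cooperate with the closedness of $C$. Nothing deeper should be needed here; in particular, neither the $k$-exponential separation nor the $k$-Lyapunov exponent is invoked for this proposition. Those heavier tools are held in reserve for the Probe Lemma, where one must actually pin down directions of perturbation aligned with the dominant bundle $E$, rather than merely a weak ordering between limit points $p$ and $q$.
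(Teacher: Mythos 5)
There is a genuine gap, and it lies in what alternative (ii) actually asserts. Since $0\in C$ (Definition \ref{kcone}(i) with $l=0$), the relation $p\sim q$ holds trivially whenever $p=q$; the substance of (ii) --- and the form in which it is invoked in the proof of the Probe Lemma --- is that one can find $p\in\omega(x)$ and $q\in\omega_c(x)$ with $p\sim q$ \emph{and} $p\neq q$. Your two-step compactness argument (extract $t_k$ so that $\Phi_{t_k}(x)\to p_n$, $\Phi_{t_k}(x_n)\to q_n$, push $q_n-p_n\in C$ through both limits) only yields the trivial version: nothing prevents $q_n=p_n$ for every $n$, and hence $p=q$. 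Indeed, if $\omega(x)$ is a single asymptotically stable equilibrium $e$ attracting a neighborhood of $x$, your construction produces $p=q=e$ and the meaningful form of (ii) fails outright --- which is exactly why the statement is a dichotomy and cannot be proved ``unconditionally'': in that scenario one must fall back on (i), since $x\in S$.

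Consequently the tools you set aside are precisely the ones that cannot be avoided. The paper's proof splits according to the sign of the $k$-Lyapunov exponents on $\omega(x)$: if some regular point of $\omega(x)$ has $\lambda_{kz}\le 0$, then \cite[Theorem 4.2]{FWW19} gives $x\in Q\cup S$ (alternative (i)); if $\lambda_{kz}>0$ on $\omega(x)$, then \cite[Lemma 4.4]{FWW19} supplies a uniform $\delta>0$ with $\limsup_{t\to\infty}\|\Phi_t(x_n)-\Phi_t(x)\|\ge\delta$, and it is this lower bound that lets one choose $\|p_n-q_n\|\ge\delta$ and conclude $p\neq q$ after passing to the limit; a third, more delicate case handles irregular points of $\omega(x)$. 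The separation $\delta$ is exactly the expansion along the dominant bundle $E$ furnished by the $k$-exponential separation, so the ``heavier tools'' are not held in reserve for the Probe Lemma --- they are the engine of this proposition. To repair your proof you would need to add the case analysis on the Lyapunov exponents and the quantitative non-convergence estimate; the limit-passage bookkeeping you describe is correct but is the easy part.
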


\begin{proof}
Our proof is motivated by the approach in \cite[Section 4]{FWW19}.
Denote by $\omega_{0}(x)$ the set of regular points on $\omega(x)$, i.e., $$\omega_{0}(x)=\{z\in \omega(x): z\text{ is a regular point}\}.$$
Then one of the following alternatives must occur:\par
\rm{(a)} $\lambda_{kz}\leq0$ for some point $z\in \omega_{0}(x)$;\par
\rm{(b)} $\lambda_{kz}>0$ for any $z\in\omega(x)$;\par
\rm{(c)} $\lambda_{kz}>0$ for any $z\in \omega_{0}(x)$; while $\lambda_{k\widetilde{z}}\leq0$ for some $\widetilde{z}\in \omega(x)\backslash\omega_{0}(x)$.\\
We will discuss the three cases one by one.\par

If (a) holds, then it follows from \cite[Theorem 4.2]{FWW19} that $x\in Q\cup S$. Thus, item (i) holds.\par

If (b) holds, we choose any sequence $\{x_{n}\}_{n=1}^{\infty}\subset D\backslash \{x\}$ satisfying $x_{n}\rightarrow x$ and $x_{n}\sim x$. Clearly, $\omega_{c}(x)$ is nonempty, compact and invariant. By virtue of \cite[Lemma 4.4]{FWW19}, there exists a constant $\delta>0$ such that
$$\limsup_{t\rightarrow\infty}\|\Phi_{t}(x_{n})-\Phi_{t}(x)\|\geq\delta, \text{\ \ for each } n\geq1.$$
Consequently, for each $n\geq1$, there are $p_{n}\in \omega(x)$ and $q_{n}\in \omega(x_{n})$ such that $\|p_{n}-q_{n}\|\geq\delta$ and $p_{n}\sim q_{n}$. Without loss of generality, one may assume that $p_{n}\rightarrow p\in \omega(x)$ and $q_{n}\rightarrow q\in \omega_{c}(x)$, as $n\rightarrow \infty$. Therefore, one has $p\sim q$ and $p\neq q$. Thus, item (ii) holds.\par

Finally, if (c) holds, then for any $y\in \omega(x)$, we write $v_{y}:=\frac{d}{dt}|_{t=0}\Phi_{t}(y)$ and let $$\lambda(z,v_{z})=\limsup_{t\rightarrow\infty}\frac{\log\|v_{\Phi_{t}(z)}\|}{t} \text{,\ \ for any } z\in \omega(x)\backslash\omega_{0}(x).$$
Fix any $z\in \omega(x)\backslash \omega_{0}(x)$, clearly, $\lambda(z,v_{z})\leq0$, since $\|v_{\Phi_{t}(z)}\|$ is bounded uniformly for $t\geq0$.\par

When $\lambda(z,v_{z})=0$, we have $v_{z}\notin F_{z}$ (For otherwise, \cite[Lemma 3.6(i)]{FWW19} implies that $\lambda(z,v_{z})\leq \lambda_{kz}+\log(\gamma)< 0$, a contradiction). Therefore, by the $k$-exponential separation, there is $T>0$ such that $D_{z}\Phi_{t}(v_{z})\in \text{Int}C$ for any $t>T$, which implies $z\in Q$. Noticing that $z\in \omega(x)$, we obtain $x\in Q$. Thus, item (i) holds.\par

When $\lambda(z,v_{z})<0$, we have $v_{\Phi_{t}(z)}\rightarrow0$ as $t\to \infty$. This implies $\omega(z)$ only consists of equilibria. Thus, $\omega(z)\subset \omega_{0}(x)$. Hence, due to (c), one has
\begin{equation}\label{3.1}
\lambda_{k\widetilde{z}}>0 \text{,\ \ for any } \widetilde{z}\in \omega(z). \tag{3.1}
\end{equation}
Now, define $C_{x}=\{y\in D: y\neq x \text{ and } y\sim x\}$ and choose a sequence $t_{n}\rightarrow \infty$ such that $\Phi_{t_{n}}(x)\rightarrow z$ as $n\rightarrow \infty$.\par

If there exists $y\in C_{x}$ with a subsequence $\{t_{n_{i}}\}_{i=1}^{\infty}$ of $\{t_{n}\}_{n=1}^{\infty}$, such that $\Phi_{t_{n_{i}}}(y)\rightarrow z$, then \cite[Lemma 4.3]{FWW17} (or \cite[Lemma 2.4]{FWW19}) implies that $z\in Q\cup \mathcal{E}$. Recall that $z\notin \omega_{0}(x)$. Then, we have $z\in Q$. Hence, we again obtain that $x\in Q$, which means item (i) holds.

On the other hand, if for any $a\in C_{x}$, there is a subsequence $t_{n_{j}}^{a}\rightarrow\infty$ of $\{t_{n}\}_{n=1}^{\infty}$ such that $\Phi_{t_{n_{j}}^{a}}(a)\rightarrow z_{a}\neq z$ as $j\rightarrow\infty$. Clearly, $z_{a}$ and $z$ are ordered for any $a\in C_{x}$. By virtue of (\ref{3.1}), we can again utilize \cite[Lemma 4.4]{FWW19} (for $\omega(z)$) to obtain that there exists $\delta>0$ such that
\begin{equation}\label{3.2}
\limsup_{t\rightarrow\infty}\|\Phi_{t}(z_{a})-\Phi_{t}(z)\|\geq\delta, \text{\ \ for any }a\in C_{x}.\tag{3.2}
\end{equation}
Take any sequence $\{x_{k}\}_{k=1}^{\infty}\subset D\backslash \{x\}$ with $x_{k}\rightarrow x$ and $x_{k}\sim x$. For each $k$, we utilize (\ref{3.2}) (hence, for each $z_{x_{k}}$) to obtain that there are $p_{k}\in \omega(z)$ and $q_{k}\in \omega(z_{x_{k}})$ such that $\|p_{k}-q_{k}\|\geq\delta$ and $p_{k}\sim q_{k}$. Recall that $\Phi_{t_{n_{j}}^{x_{k}}}(x_{k})\rightarrow z_{x_{k}}$ as $j\rightarrow\infty$, one has $z_{x_{k}}\in\omega(x_{k})$ and hence $\omega(z_{x_{k}})\subset\omega(x_{k})$. Thus we obtain $q_{k}\in \omega(x_{k})$. And also $p_{k}\in \omega(x)$ since $\omega(z)\subset\omega(x)$. Without loss of generality, one may assume that $p_{k}\rightarrow p\in \omega(x)$ and $q_{k}\rightarrow q\in \omega_{c}(x)$, as $k\rightarrow \infty$. Therefore, one has $p\sim q$ and $p\neq q$, which means item (ii) holds. Thus, we have completed the proof.
\end{proof}

Now, we are going to prove the Lemma \ref{probe}.

\vskip 3mm

\noindent\emph{Proof of lemma \ref{probe}} (\emph{Probe Lemma}).
We prove this lemma by contradiction. Suppose that there exists a sequence $x_n\rightarrow x$ with $x_{n}\neq x$, such that  $x_n\in P_x\setminus Q$  for any $n\in \mathbb{N}$.\par

Then, by recalling that $x\notin Q\cup S$, it follows from Proposition \ref{class} that there are $p\in \omega(x)$ and $q\in \cap_{n\geq1}\overline{\cup_{m\geq n}\omega(x_{m})}$ such that $p\ne q$ and $p\sim q$. Since $\Phi$ is strongly monotone, we may assume that $p\approx q$. Let $U$ and $V$ be the neighborhood of $p$ and $q$, respectively, such that $U\approx V$. Choose two sequences $s_{k}\rightarrow\infty$ and $n_{k}\rightarrow\infty$ such that $\Phi_{s_{k}}(x_{n_{k}})\in V$ for all $k$ sufficiently large. Let also $\tau>0$ be such that $\Phi_{\tau}(x)\in U$. Then, $\Phi_{\tau}(x_{n_{k}})\in U$; and hence, $\Phi_{\tau}(x_{n_{k}})\approx\Phi_{s_{k}}(x_{n_{k}})$ for all $k$ sufficiently large. As a consequence, one has $x_{n_{k}}\in Q$ for all $k$ sufficiently large, a contradiction. Thus, we have completed the proof.
\hfill$\square$

\noindent\section{Proof of the main Theorems}\label{sec4}

In this section, we will prove our main Theorems. For this purpose, besides the Lemma \ref{probe} (Probe Lemma), we still need the following lemma to verify the shyness of a subset $W\subset\mathbb{R}^{n}$.

\begin{lem}\label{shy}
Let $W\subset\mathbb{R}^{n}$ be a Borel subset. Assume that there exists a $k$-probe $P\subset \rm{int}$$C$ such that $W\cap P_{v}$ is at most countable for any $v\in \mathbb{R}^{n}$. Then $W$ is shy.
\end{lem}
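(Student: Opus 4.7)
\textbf{Proof proposal for Lemma \ref{shy}.} The plan is to exhibit explicitly a transverse measure that witnesses the shyness of $W$, namely the $k$-dimensional Lebesgue measure on a compact piece of the probe $P$. Since $P$ is a $k$-dimensional linear subspace, fix an identification $P\cong\RR^{k}$ and let $\lambda_{P}$ denote the induced $k$-dimensional Lebesgue measure on $P$. Choose any compact subset $K\subset P$ with $\lambda_{P}(K)>0$ (e.g.\ a closed ball in $P$ around the origin), and define the Borel measure $\mu$ on $\RR^{n}$ by
\[
\mu(A)\;=\;\lambda_{P}(A\cap K),\qquad A\subset\RR^{n}\ \text{Borel}.
\]
Then $\mu$ is nonzero (since $\mu(K)=\lambda_{P}(K)>0$) and compactly supported (its support is contained in $K$).

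The key step is to verify that $\mu(v+W)=0$ for every $v\in\RR^{n}$. Since $v+W$ is Borel and $\mu$ is concentrated on $K\subset P$, we have
\[
\mu(v+W)\;=\;\lambda_{P}\bigl((v+W)\cap K\bigr)\;\leq\;\lambda_{P}\bigl((v+W)\cap P\bigr).
\]
Now a point $u\in\RR^{n}$ lies in $(v+W)\cap P$ if and only if $u\in P$ and $u-v\in W$; equivalently, $w:=u-v\in W\cap(P-v)=W\cap P_{-v}$, and $u=v+w$. Consequently the map $w\mapsto v+w$ gives a bijection $W\cap P_{-v}\to(v+W)\cap P$. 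By hypothesis $W\cap P_{-v}$ is at most countable, so $(v+W)\cap P$ is a countable subset of the $k$-dimensional affine space $P$, which has $\lambda_{P}$-measure zero. Therefore $\mu(v+W)=0$.

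Since $W$ is Borel and we have produced a nonzero compactly supported Borel measure $\mu$ on $\RR^{n}$ with $\mu(v+W)=0$ for every $v\in\RR^{n}$, the definition of shyness gives that $W$ is shy, which completes the proof. There is no serious obstacle here: the statement is essentially a direct application of the definition once one recognizes that the $k$-probe $P$ is precisely the right transversal direction to carry the witnessing Lebesgue measure, so the only thing to check is the translation-invariant slicing identity $(v+W)\cap P=v+(W\cap P_{-v})$.
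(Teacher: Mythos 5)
Your proposal is correct and follows essentially the same route as the paper: both arguments take the $k$-dimensional Lebesgue measure restricted to a compact subset of the probe $P$ (the paper uses the unit disc in $P$, you use an arbitrary compact set of positive measure) as the witnessing transverse measure, and both reduce to the slicing identity $(v+W)\cap P=v+(W\cap P_{-v})$ together with the fact that countable sets are $\lambda_{P}$-null. No gaps.
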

\begin{proof}
Fix the $k$-dimensional unit disc $D\subset P$. Define the measure on $\mathbb{R}^{n}$ as
\begin{equation}\label{*}
\mu_{D}(A)\triangleq m(A\cap D),\text{\ \  for any }A\in \mathfrak{B}(\mathbb{R}^{n}). \tag{4.1}
\end{equation}
Here $m$ is the Lebesgue measure on $P$, and $\mathfrak{B}(\mathbb{R}^{n})$ consists of Borel sets of $\mathbb{R}^{n}$ . Clearly, $\mu$ is a nonzero compactly supported Borel measure on $\mathbb{R}^{n}$. Moreover, for any $v\in \mathbb{R}^{n}$, one has
$$(v+W)\cap D\subset(v+W)\cap P=v+(W\cap P_{-v}).$$
Since $W\cap P_{-v}$ is at most countable, we have $m((v+W)\cap D)=0$. By (\ref{*}), this entails that $\mu_{D}(v+W)=0$. Therefore, $W$ is shy.
\end{proof}

Now we are ready to prove the main Theorems.

\vskip 3mm

\noindent$\emph{Proof of Theorem A}.$
Let $N=D\backslash(Q\cup S)$. Then $N$ is a Borel set, since $Q$ and $S$ are both Borel sets. Fix a probe $P\subset \rm{Int}$$C$. In order to show that $N$ is shy, by Lemma \ref{shy}, it suffices to show that $N\cap P_{v}$ is at most countable, for any $v\in \mathbb{R}^{n}$.\par
To this end, fix any $x\in N\cap P_{v}$, then lemma \ref{probe} (Probe Lemma) implies that there exists $\varepsilon>0$ such that $B^{P}(x,\varepsilon)\backslash\{x\}\subset Q$. Consequently, $x$ can not be an accumulation point in $N\cap P_{v}$. By the arbitrariness of $x$ and the separability of $P_{v}$, we obtain that $N\cap P_{v}$ is at most countable.\par
Thus, we have proved that $N$ is shy. Hence, $Q\cup S$ is prevalent in $D$. Finally, in $\mathbb{R}^{n}$, a subset is prevalent if and only if it has full Lebesgue measure. Thus, $Q\cup S$ is full Lebesgue measure in $D$. We have completed the proof.
\hfill$\square$

\vspace{2 ex}

\noindent$\emph{Proof of Theorem B}.$
By virtue of Theorem A, the subset $Q\cup S$ is full Lebesgue measure in $D$. Recall that $k=2$. Then, for any $x\in D\cap(Q\cup S)$, the fact $\omega(x)\cap E=\emptyset$ directly implies that $x\in Q$. It then follows from Feng et al. \cite[Theorem C]{FWW17} (see also Sanchez\cite[Theorem 1]{S09}) that $\omega(x)$ must be a periodic orbit. Thus, we have completed the proof.
\hfill$\square$

\vspace{4 ex}

\noindent\section{Applications}\label{sec6}

In this section, we will apply the obtained Almost Sure Poincar\'e-Bendixson Theorem for high-dimensional autonomous systems to the ordinary differential equations.\par

Consider a system of ODEs
\begin{eqnarray}\label{E1}
\dot{x}=F(x),\ \ \ x\in \mathbb{R}^{n},
\end{eqnarray}
for which $F$ is a $C^{2}$-smooth vector filed defined in $\mathbb{R}^{n}$. Let $\Phi_{t}$ be the flow generated by the system (\ref{E1}). We call the system (\ref{E1}) is \emph{dissipative} if there exists a bounded set $\Lambda\subset\mathbb{R}^{n}$ satisfying for each $x\in \mathbb{R}^{n}$ there exists a constant $t_{0}>0$ such that $\Phi_{t}(x)\in \Lambda$ for all $t\geq t_{0}$.\par
Let $C\subset\mathbb{R}^{n}$ be a $k$-cone. We call the system (\ref{E1}) is $C$-cooperative if the fundamental solution matrix $X^{ij}(t)$ of the linear system
$$\dot{X}=Q^{ij}(t)X,\ \ \ U(0)=I$$
satisfies
\begin{equation}\label{5.2}
Q^{ij}(t)(C\backslash\{0\})\subset \rm{Int}\emph{C},\ \text{for all}\ t>0, \tag{5.2}
\end{equation}
which $Q^{ij}(t)=\int_{0}^{1}DF(\tau\Phi_{t}(i)+(1-\tau)\Phi_{t}(j))d\tau.$\par
Specially, a system satisfying (\ref{5.2}) is called strongly cooperative if $C$ is a classical cone.\par
Assume now that $C\subset\mathbb{R}^{n}$ is a complemented $2$-solid cone. Then, we give the following Almost Sure Poincar\'e-Bendixson Theorem for system (\ref{E1}).
\begin{thm}\label{E}
Assume that system \rm{(}\ref{E1}\rm{)} is dissipative and $C$-cooperative. Then, for almost every point $x\in \mathbb{R}^{n}$, if the $\omega$-limit set $\omega(x)$ contains no equilibrium, then $\omega(x)$ is a periodic orbit.
\end{thm}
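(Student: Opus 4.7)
The plan is to reduce Theorem \ref{E} to Theorem B by verifying that hypothesis (H) holds globally for the flow $\Phi_t$, and then exhausting $\mathbb{R}^n$ by a countable family of open bounded sets whose orbit sets are bounded.

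First, I would verify (H). Since $F$ is $C^2$, the flow $\Phi_t$ is $C^{1,1}$-smooth in $x$, hence $C^{1,\alpha}$-smooth with $\alpha = 1$. For the monotonicity, note that for any $x, y \in \mathbb{R}^n$ with $x \sim y$,
\[
\Phi_t(x) - \Phi_t(y) = X^{xy}(t)(x-y),
\]
where $X^{xy}(t)$ is the fundamental matrix from the definition of $C$-cooperativity. The cooperativity condition forces $X^{xy}(t)(C\setminus\{0\}) \subset \mathrm{Int}\, C$ for all $t>0$—either by direct reading of the definition, or, if one treats the condition as a Kamke-type condition on the generator $Q^{xy}(t)$, via a standard Picard-iteration / exponential-series argument propagating strict positivity from the generator to the fundamental solution. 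Strong monotonicity of $\Phi_t$ with respect to $C$ follows immediately. Letting $y \to x$ makes $X^{xy}(t)$ converge to $D_x\Phi_t$, yielding the derivative condition $D_x\Phi_t(C\setminus\{0\}) \subset \mathrm{Int}\, C$ for $t>0$. Hence (H) holds on all of $\mathbb{R}^n$ with the complemented $2$-solid cone $C$.

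Next, I would set up a countable cover. Let $\Lambda$ be a bounded absorbing set provided by dissipativity, and for each integer $m \geq 1$ put $D_m = B(0, m)$. By continuous dependence on initial data together with dissipativity and compactness of $\overline{D_m}$, one extracts a uniform absorbing time $T_m > 0$ such that $\Phi_t(\overline{D_m}) \subset \Lambda$ for all $t \geq T_m$. Consequently
\[
O(D_m) \subset \Phi_{[0,T_m]}(\overline{D_m}) \cup \Lambda,
\]
which is bounded. Theorem B applied to each $D_m$ then supplies a Lebesgue-null exceptional set $N_m \subset D_m$ such that for every $x \in D_m \setminus N_m$ with $\omega(x) \cap \mathcal{E} = \emptyset$, the set $\omega(x)$ is a single closed (hence periodic) orbit. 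Setting $N := \bigcup_{m \geq 1} N_m$ gives a Lebesgue-null subset of $\mathbb{R}^n = \bigcup_m D_m$ off which the conclusion of the theorem holds.

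The main obstacle I foresee is the verification of (H)—specifically, the passage from the $C$-cooperativity hypothesis (a condition on the generator or fundamental matrix of the variational system) to both the strong monotonicity of $\Phi_t$ and the strict invariance $D_x\Phi_t(C\setminus\{0\}) \subset \mathrm{Int}\, C$. In the classical convex-cone setting this is the familiar Kamke-Müller framework; in the non-convex $k$-cone setting, the passage from infinitesimal positivity to finite-time positivity of the flow relies on the exponential-separation apparatus underlying \cite{FWW19}. Once (H) is secured, what remains is a routine invocation of Theorem B together with the countable-cover argument above.
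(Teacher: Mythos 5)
Your proposal is correct and follows essentially the same route as the paper: verify hypothesis (H) for the flow generated by the $C$-cooperative system, then exhaust $\mathbb{R}^n$ by the balls $B(0,m)$, whose orbit sets are bounded by dissipativity, and take the union of the null exceptional sets from Theorem B. The only difference is that the paper disposes of the (H) verification (the step you rightly flag as the main obstacle) by citing Sanchez \cite[Proposition 1]{S09} rather than arguing it from the fundamental matrix directly.
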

\begin{proof}
The flow $\Phi_{t}$ satisfies the assumption (H) if system (\ref{E1}) is $C^{1,\alpha}$-smooth and $C$-cooperative by \cite[Proposition 1]{S09}.\par
Let $B_{i}=\{x\in \mathbb{R}^{n}: \|x\|< i\}$ for any integer $i\geq1$. Since the system (\ref{E1}) is dissipative, the orbit set $O(B_{i})$ of $B_{i}$ is bounded. Let $\mu$ be the Lebesgue measure on $\mathbb{R}^{n}$. According to the Almost Sure Poincar\'e-Bendixson Theorem, for each $B_{i}$, there is a subset $A_{i}\subset B_{i}$ with $\mu(A_{i})=\mu(B_{i})$, such that for any $x\in A_{i}$ the $\omega$-limit set containing no equilibria is a closed orbit.\par
Now, let $A=\cup_{i=1}^{\infty} A_{i}$. Note that $\mathbb{R}^{n}=\cup_{i=1}^{\infty} B_{i}$. We have $\mu(A^{c})=0$ as $A^{c}=\cup_{i\geq1} (B_{i}\backslash A_{i})$. Also, for any $x\in A$, the $\omega$-limit set $\omega(x)$ containing no equilibria is a single closed orbit. We have completed the proof.
\end{proof}

In the rest of this section, we specifically describe a quadratic cone which is 2-solid and complemented. Let $Q$ be a constant non-singular $n\times n$ matrix which is real symmetric. If $Q$ has 2 negative eigenvalues and $(n-2)$ positive eigenvalues, then $C^{-}(Q)=\{x\in \mathbb{R}^{n}: x^{\ast} Q x\leq0\}$ is a complemented 2-solid cone, which $x^{\ast}$ denote the transpose of the vector $x$.\par
In \cite{S09,S10}, Sanchez shows a sufficient conditions for $C^{-}(Q)$-cooperative system. Let $\lambda:\mathbb{R}^{n}\rightarrow \mathbb{R}$ be a continuous function such that the matrices
\begin{equation}\label{5.3}
Q\cdot DF(x)+(DF(x))^{\ast}\cdot Q+\lambda(x)Q<0,\ \text{for any}\ x\in \mathbb{R}^{n},\tag{5.3}
\end{equation}
where $(DF(x))^{\ast}$ is the transpose of the Jacobian $DF(x)$ and ``$<$" represents the usual order in the space of symmetric matrices. Then system (\ref{E1}) is $C^{-}(Q)$-cooperative if it satisfies (\ref{5.3}) (see more details in Sanchez \cite[Proposition 7]{S09}).\par
Along this, we obtain the following Almost Sure Poincar\'e-Bendixson Theorem for high-dimensional systems (\ref{E1}) with a quadratic cone:
\begin{cor}
Assume that system \emph{(}\ref{E1}\emph{)} is dissipative and satisfies \rm{(}\ref{5.3}\rm{)}. Then, for almost every point $x\in \mathbb{R}^{n}$, if the $\omega$-limit set $\omega(x)$ contains no equilibrium, then $\omega(x)$ is a periodic orbit.
\end{cor}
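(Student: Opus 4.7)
The plan is to view this corollary as a direct specialization of Theorem \ref{E} to the quadratic cone setting, so the real work is (i) checking that $C^{-}(Q)$ qualifies as a $2$-solid complemented cone and (ii) translating the matrix inequality (\ref{5.3}) into $C^{-}(Q)$-cooperativity. Once both are in place, the conclusion is immediate from Theorem \ref{E}.

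First I would verify the cone structure. Since $Q$ is real symmetric and non-singular with exactly two negative eigenvalues, the spectral theorem gives an orthogonal decomposition $\mathbb{R}^{n}=E_{-}\oplus E_{+}$, where $\dim E_{-}=2$, $\dim E_{+}=n-2$, $x^{\ast}Qx<0$ for $x\in E_{-}\setminus\{0\}$ and $x^{\ast}Qx>0$ for $x\in E_{+}\setminus\{0\}$. Hence $E_{-}\setminus\{0\}\subset\mathrm{Int}\,C^{-}(Q)$, showing $C^{-}(Q)$ is $2$-solid, while $E_{+}\cap C^{-}(Q)=\{0\}$ shows it is complemented with codimension $2$. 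Moreover any linear subspace $W\subset C^{-}(Q)$ of dimension $\ge 3$ would be forced by dimension count to intersect $E_{+}$ non-trivially, contradicting $E_{+}\cap C^{-}(Q)=\{0\}$; so the rank equals exactly $2$, confirming Definition \ref{kcone} with $k=2$.

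Next I would invoke Sanchez \cite[Proposition 7]{S09}, which is explicitly recalled in the paragraph preceding the corollary: the pointwise matrix inequality (\ref{5.3}) guarantees that along any two trajectories the averaged Jacobian $Q^{ij}(t)$ satisfies the strong positivity property (\ref{5.2}) relative to $C^{-}(Q)$. Combined with dissipativity, this verifies all hypotheses of Theorem \ref{E} for the cone $C=C^{-}(Q)$. Applying Theorem \ref{E} directly yields that, for almost every $x\in\mathbb{R}^{n}$, the absence of equilibria in $\omega(x)$ forces $\omega(x)$ to be a single periodic orbit, which is the desired statement.

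The only place one might worry is whether the cited proposition of Sanchez really delivers hypothesis \textbf{(H)} of the present paper (strong monotonicity plus $D_{x}\Phi_{t}(C\setminus\{0\})\subset\mathrm{Int}\,C$ for $t>0$) and not just plain monotonicity; but this is exactly what the strict inequality in (\ref{5.3}) is designed to enforce, and it is already used implicitly in the proof of Theorem \ref{E}, so no new argument is required here. In short, I do not anticipate a substantive obstacle: the corollary is essentially a ``plug-in'' of the quadratic-cone verification into the machinery of Theorem \ref{E}.
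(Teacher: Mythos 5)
Your proposal matches the paper's intended derivation exactly: the paper states the corollary without a separate proof precisely because it follows by combining the preceding observations (that $C^{-}(Q)$ is a complemented $2$-solid cone and that (\ref{5.3}) implies $C^{-}(Q)$-cooperativity via Sanchez's Proposition 7) with Theorem \ref{E}. Your verification of the cone structure and the plug-in argument are correct and consistent with the paper.
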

\begin{rmk}
Recently, Feng et.al \cite{FWW19} proved a generic Poincar\'e-Bendixson Theorem for this system (\ref{E1}) with the assumption (\ref{5.3}). We here obtain an almost-sure Poincar\'e-Bendixson Theorem for $n$-dimensional ODE system (\ref{E1}) with a quadratic cone. Thus, we conclude, for $n$-dimensional ODE system (\ref{E1}) with a quadratic cone, the ``Poincar\'e-Bendixson Theorem" holds not only generically, but also almost-surely in measure-theoretic sense.
\end{rmk}

\end{document}